\newtheorem{Theorem}{Theorem}[section]
\newtheorem{Lemma}{Lemma}[section]
\newtheorem{Prop}{Proposition}[section]
\newtheorem{Example}{Example}[section]
\title{A lower bound for the Graver complexity
of the incidence matrix of a complete bipartite graph}
\author{Taisei Kudo%
\thanks{Department of 
Mathematical Informatics, 
Graduate School of Information Science and Technology, 
University of Tokyo.} \ and \ 
Akimichi Takemura\footnotemark[1]\ \thanks{JST, CREST}}
\date{February 2011}
\begin{document}
\maketitle

\begin{abstract}
We give an exponential lower bound for
the Graver complexity of the incidence matrix of a complete bipartite graph 
of arbitrary size. 
Our result is a generalization of the result by \citet{berstein09}
for the complete bipartite graph $K_{3,r}$, $r\ge 3$.
\end{abstract}

\noindent{\it Keywords and phrases:}  algebraic statistics, contingency table, 
three-way transportation program.

\section{Introduction and the main result}
The Graver complexity of an integer matrix is currently actively
investigated for its importance to integer programming, algebraic
statistics and other applications (\cite{berstein09}, \cite{loera08}, \cite{hosten07}, \cite{aoki03}).  In particular, from the 
universality  of the three-way transportation program 
to general integer programs (\citet{loera06}), the Graver complexity 
of the incidence matrix of the  complete bipartite graph $K_{3,r}$ is 
particularly important. \citet{berstein09} proved that
the Graver complexity $g(r)$ for the incidence matrix of $K_{3,r}$, $r\ge 3$, 
is bounded below as $g(r)=\Omega(2^r)$, where $g(r)\ge 17\cdot 2^{r-3} -7$.
It is a natural question to generalize this result to the complete
bipartite graph $K_{t,r}$ of arbitrary size $t,r$.  
We prove that the Graver complexity for $K_{t,r}$ 
is $\Omega((t-1)^r)$, where $t\ge 4$ is fixed and $r$ diverges to infinity.
For proving our result, we employ double induction on $r$ and $t$ starting
from the result of \cite{berstein09}.

Let $A_{t,r}$ 
denote the incidence matrix of the
complete bipartite graph $K_{t,r}$ and let 
$g(A_{t,r})$ denote its Graver complexity.
Here we state our main theorem.
Relevant notations and definitions will be given in the next section.

\begin{Theorem}
\label{thm:main}
The Graver complexity of $A_{t,r}$ for any $4\le t\le r$ 
is bounded from below as
\begin{equation}\nonumber
 g(A_{t,r})\geq (t-1)^{r-t}(b_t+\frac{1}{t-2})-\frac{1}{t-2},  
\end{equation}
where
\begin{equation}\nonumber
 b_t=
(t-2)!\left(15+\sum_{i=1}^{t-4}\frac{i+4}{(i+2)!}\right).
\end{equation}
\end{Theorem}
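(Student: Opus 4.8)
The plan is to reduce Theorem~\ref{thm:main} to two ``lifting'' estimates for the Graver complexity — one for enlarging the $r$-side and one for enlarging the $t$-side of $K_{t,r}$ — and then to run a double induction (on $r$ for each fixed $t$, and on $t$ for the diagonal values), anchored at the bound of \cite{berstein09}. Concretely, I would first establish
\begin{equation}\nonumber
g(A_{t,r})\ \ge\ (t-1)\,g(A_{t,r-1})+1\quad(r>t\ge4),\qquad g(A_{t+1,r})\ \ge\ g(A_{t,r})+t\quad(r\ge t+1,\ t\ge3).
\end{equation}
Call these (A) and (B). Recall that $g(A_{t,r})$ is (as recalled in the next section) the maximal $\ell_1$-norm of a vector in the Graver basis of the matrix whose columns are the Graver basis elements of $A_{t,r}$; a lower bound on it is proved by exhibiting one such vector and checking it is primitive, and (A), (B) are to be obtained by taking a vector that realizes the complexity of the smaller matrix and extending it across the edges at one new vertex.

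Granting (A) and (B), the rest is bookkeeping. First one proves $g(A_{t,t})\ge b_t$ for all $t\ge4$ by induction on $t$. For $t=4$: apply (B) with $t=3,\ r=4$ and use $g(A_{3,4})\ge 17\cdot2^{4-3}-7=27$ from \cite{berstein09} to get $g(A_{4,4})\ge 27+3=30=b_4$. For the step $t-1\to t$ (so $t\ge5$): estimate (A) at $(t-1,t)$ gives $g(A_{t-1,t})\ge(t-2)\,g(A_{t-1,t-1})+1\ge(t-2)b_{t-1}+1$ by the inductive hypothesis, and then (B) gives $g(A_{t,t})\ge(t-2)b_{t-1}+1+(t-1)=(t-2)b_{t-1}+t$, which equals $b_t$ because $b_t-(t-2)b_{t-1}$ is $(t-2)!$ times the $i=t-4$ summand $t/(t-2)!$ in the closed form for $b_t$, i.e.\ $t$. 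Now fix $t\ge4$ and induct on $r\ge t$: the base $r=t$ is exactly the inequality $g(A_{t,t})\ge b_t$ just proved (the right-hand side of the theorem collapses to $b_t$ when $r=t$), and the step uses (A),
\begin{equation}\nonumber
g(A_{t,r})\ \ge\ (t-1)\Bigl[(t-1)^{r-1-t}\bigl(b_t+\tfrac1{t-2}\bigr)-\tfrac1{t-2}\Bigr]+1\ =\ (t-1)^{r-t}\bigl(b_t+\tfrac1{t-2}\bigr)-\tfrac1{t-2},
\end{equation}
where the last equality uses $-\tfrac{t-1}{t-2}+1=-\tfrac1{t-2}$. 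This is the claimed bound.

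The real content, and the main obstacle, is proving (A) and (B). For (A) I would start from a vector realizing $g(A_{t,r-1})$, view the edge set of $K_{t,r}$ as that of $K_{t,r-1}$ together with the $t$ edges at the new $r$-side vertex $v$, and extend the realizing vector by routing suitable circuits of $K_{t,r}$ through $v$. The factor $t-1$ should reflect the fact that the kernel vectors supported on the $t$ edges at $v$ form a space of dimension $t-1$ (they satisfy the single relation ``their entries sum to zero''), so roughly $t-1$ scaled copies of the old relation can be strung together before one is forced back into a dependence; the additive $+1$ should come from a single extra primitive circuit that must be inserted at $v$ to close things up. For (B), the new $t$-side vertex is joined to all $r$ vertices on the other side, and threading through it contributes a fixed amount $t$ by an analogous but simpler construction. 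In both cases the delicate point — where I expect to spend almost all of the effort, and which is exactly the kind of case analysis carried out in \cite{berstein09} for $t=3$ — is verifying that the extended vector still lies in the Graver basis of the relevant ``Graver matrix'': that it is conformally minimal and is not the sum of two nonzero kernel vectors conformal to it. The hope is that, once the construction is organized correctly, the general-$t$ argument runs essentially in parallel to the $t=3$ case, with $b_t$ and the two multipliers $t-1$ and $t$ taking over the roles of the explicit constants $17$ and $7$.
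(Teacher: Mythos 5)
Your skeleton is the same as the paper's --- a double induction anchored at the Berstein--Onn relation for $A_{3,4}$, with one recursion multiplying by $t-1$ and adding $1$ when $r$ grows and one adding $t$ when $t$ grows --- and all of your arithmetic (including $b_t-(t-2)b_{t-1}=t$ and the telescoping in $r$) checks out against the paper. But the two inequalities (A) and (B) that carry the whole argument are exactly the part you have not proved, and the way you have formulated them is a real obstacle, not just a deferred verification. You state them as recursions on $g$ itself and propose to prove them by ``taking a vector that realizes the complexity of the smaller matrix and extending it.'' A vector realizing $g(A_{t,r-1})$ is an arbitrary maximal element of ${\mathcal G}({\mathcal G}(A_{t,r-1}))$: it need not be a relation on circuits, need not contain any coefficient equal to $1$, and has no structure you can reroute through a new vertex. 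The paper never proves (A) or (B) for $g$; it proves them for a specially constructed family of primitive relations on circuits, and the induction carries a structural invariant that your formulation discards: at every stage the circuit collection contains a distinguished ``long'' circuit $x^k=(v_1,u_{r-t+1},v_2,u_{r-t+2},\dots,v_t,u_r)$ visiting all $t$ row-vertices, whose coefficient in the relation is exactly $1$.

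That invariant is where the multiplier $t-1$ actually comes from, and it is the idea missing from your sketch. For the $r\to r+1$ step one replaces $x^k$ by the $t$ circuits obtained by rerouting each of its $t$ column-vertices in turn to the new vertex $u_{r+1}$; these $t$ circuits sum to $(t-1)x^k$, which forces the coefficients of all the \emph{other} circuits to be multiplied by $t-1$ while the $t$ new circuits each get coefficient $h_k=1$ (this is why $h_k=1$ must be part of the invariant --- otherwise the new coefficients would not stay integral and coprime). Primitivity of the new relation is then checked by looking at the column of $u_{r+1}$, which forces the $t$ new coefficients to be equal, collapsing any relation back to one on the original circuits. The $t\to t+1$ step similarly splits the distinguished circuit into $t+1$ four-cycles through the new row-vertex plus one longer circuit, summing to $x^k$, giving the additive $+t$. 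Without tracking this distinguished circuit and its unit coefficient through both recursions, neither the factor $t-1$ nor the primitivity verification can be carried out, so as written your (A) and (B) remain unsupported claims rather than lemmas.
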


We give a proof of this theorem in Section 
\ref{sec:proof} after giving necessary definitions
and reviewing relevant known results
in Section \ref{sec:preliminaries}.  
We conclude the paper with some discussion in
Section \ref{sec:discussion}.

\section{Preliminaries}
\label{sec:preliminaries}
In this section we summarize our notation and review relevant known
results on the Graver complexity following \citet{berstein09}.

The integer kernel of an $s\times t$ integer matrix  
$A$ is denoted by $\ker_{\mathbb Z}(A)=\{x\in{\mathbb Z}^t \mid
Ax=0\}$.  Define a partial order $\sqsubseteq$ on 
${\mathbb Z}^t$, which extends the coordinate-wise order $\leq$
on ${\mathbb Z}_+^t$,  as follows: 
For two vectors $u,v\in {\mathbb Z}^t$,
$u \sqsubseteq v$ if 
$|u_i|\leq |v_i|$ and 
$u_iv_i\geq 0$
for $i=1,\dots,t$.
The {\it Graver basis} ${\mathcal G}(A)$ of $A$ is the finite set 
of $\sqsubseteq$-minimal elements in the set $\ker_{\mathbb Z}(A)\setminus \{0\}$.

For any  fixed positive integer $h$, write an $ht$-dimensional
integer vector $x\in{\mathbb Z}^{ht}$ as  $x=(x^1,\dots,x^h)$ with each block 
$x^i$ belonging to ${\mathbb Z}^t$.
The {\it type} of $x=(x^1,\dots,x^h)$
is the number 
${\rm type}(x):=\#(\{i\mid x^i\neq 0\})$
of nonzero blocks of $x$.
The {\it $h$-th Lawrence lifting} of an 
$s \times t$ matrix $A$ is the following 
$(t+hs) \times ht$ matrix, with $I_t$ denoting 
the $t \times t$ identity matrix:
\begin{equation}
A^{(h)}:=
 \left(
\begin{array}{ccccc}
 A & 0 & 0 & \dots & 0 \\
 0 & A & 0 & \dots & 0 \\
 \vdots & \vdots & \ddots & \vdots & \vdots \\
 0 & 0 & 0 & \dots & A \\
 I_t & I_t & I_t & \dots & I_t \\
\end{array}
\right). 
\end{equation}
The {\it Graver complexity} of $A$
is defined as
\begin{equation}
  {g}(A)=\sup\left(\{0\}\cup\Set{{\rm type}(x) | x\in
\bigcup_{h\geq 1}{\mathcal G}\left(A^{(h)}\right)}\right).
\end{equation}

Let ${\mathcal G}({\mathcal G}(A))$ denote the Graver basis of
a matrix whose columns are the elements of 
${\mathcal G}(A)$ ordered arbitrarily.  The following result
shows that the Graver complexity of $A$ is determined by
${\mathcal G}({\mathcal G}(A))$. 
\begin{Prop}
\label{prop:graver-graver}
\cite{santos03} \ 
The Graver complexity of $A$ satisfies 
\begin{equation}\nonumber
 {g}(A)={\rm max}\{ \| x \|_1:
x\in{\mathcal G}({\mathcal G}(A)) \},
\end{equation}
where $\|\cdot \|_1$ denotes the 1-norm of a vector.
\end{Prop}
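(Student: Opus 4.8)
The plan is to unwind the two definitions involved and set up a type-preserving correspondence between the Graver basis elements of the Lawrence liftings $A^{(h)}$ and the elements of ${\mathcal G}({\mathcal G}(A))$. The first step is to describe $\ker_{\mathbb Z}(A^{(h)})$: writing $x=(x^1,\dots,x^h)$ with each $x^i\in{\mathbb Z}^t$, the diagonal blocks of $A^{(h)}$ force $Ax^i=0$ for all $i$ while the bottom block $(I_t\ \cdots\ I_t)$ forces $\sum_i x^i=0$, so
\begin{equation}\nonumber
\ker_{\mathbb Z}(A^{(h)})=\bigl\{(x^1,\dots,x^h)\ :\ x^i\in\ker_{\mathbb Z}(A)\ \text{for all }i,\ {\textstyle\sum_{i=1}^h}\,x^i=0\bigr\}.
\end{equation}
Since the order $\sqsubseteq$ is coordinatewise, $z\sqsubseteq x$ in ${\mathbb Z}^{ht}$ is equivalent to $z^i\sqsubseteq x^i$ for every block $i$; this block decoupling is what makes the correspondence possible.

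Next I would record the easy half as a lemma: \emph{if every nonzero block of $x\in\ker_{\mathbb Z}(A^{(h)})$ lies in ${\mathcal G}(A)$, and no proper nonempty sub-multiset of the nonzero blocks sums to $0$, then $x\in{\mathcal G}(A^{(h)})$.} Indeed, if $0\neq z\sqsubseteq x$ lies in $\ker_{\mathbb Z}(A^{(h)})$, then $z^i\sqsubseteq x^i$ and $z^i\in\ker_{\mathbb Z}(A)$; since each nonzero $x^i$ is $\sqsubseteq$-minimal in $\ker_{\mathbb Z}(A)\setminus\{0\}$ we get $z^i\in\{0,x^i\}$, so the nonzero blocks of $z$ form a nonempty sub-multiset of those of $x$ that sums to $0$, which by hypothesis can only be all of them, forcing $z=x$. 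This lemma immediately yields the lower bound $g(A)\ge\max\{\|\nu\|_1:\nu\in{\mathcal G}({\mathcal G}(A))\}$: given such a $\nu$ with $N=\|\nu\|_1$, build $x\in\ker_{\mathbb Z}(A^{(N)})$ by listing, for each column $g$ of the matrix whose columns are ${\mathcal G}(A)$, exactly $|\nu_g|$ blocks equal to ${\rm sign}(\nu_g)\,g\in{\mathcal G}(A)$; these blocks sum to $\sum_g\nu_g g=0$, and a proper nonempty sub-multiset summing to $0$ would give a nonzero $\nu'\sqsubseteq\nu$ with $\nu'\neq\nu$ in that matrix's kernel, contradicting $\nu\in{\mathcal G}({\mathcal G}(A))$. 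Hence $x\in{\mathcal G}(A^{(N)})$ with ${\rm type}(x)=N$.

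For the reverse inequality, take $x\in{\mathcal G}(A^{(h)})$ and, using the conformal (sign-compatible) decomposition property of Graver bases, write each nonzero block as $x^i=\sum_s g_{i,s}$ with every $g_{i,s}\in{\mathcal G}(A)$ conformal to $x^i$, so that any partial sum of the $g_{i,s}$ is $\sqsubseteq x^i$. Let $m\in{\mathbb Z}_{\ge0}^{{\mathcal G}(A)}$ record the multiplicities of the $g_{i,s}$; then $\sum_g m_g g=\sum_i x^i=0$. The key claim is that $m$ is $\sqsubseteq$-minimal among the nonzero relations, hence $m\in{\mathcal G}({\mathcal G}(A))$ --- note that any vector $\sqsubseteq$ the nonnegative vector $m$ is itself nonnegative, so minimality among nonnegative relations coincides with minimality among all relations. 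If $m$ were not minimal, a nonzero $m'\sqsubseteq m$ with $m'\neq m$ and $\sum_g m'_g g=0$ would exist; selecting inside each block a sub-collection of the $g_{i,s}$ that realizes the multiplicities $m'$ and summing it produces vectors $y^i\sqsubseteq x^i$, and then $y=(y^1,\dots,y^h)$ is a nonzero element of $\ker_{\mathbb Z}(A^{(h)})$ with $y\sqsubseteq x$, $y\neq x$, contradicting the minimality of $x$. Since ${\rm type}(x)$ is at most the total number of summands $g_{i,s}$, which equals $\|m\|_1$, we obtain ${\rm type}(x)\le\|m\|_1\le\max\{\|\nu\|_1:\nu\in{\mathcal G}({\mathcal G}(A))\}$. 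Taking the supremum over $h$ and over $x\in{\mathcal G}(A^{(h)})$ then gives $g(A)=\max\{\|x\|_1:x\in{\mathcal G}({\mathcal G}(A))\}$.

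The step I expect to be the main obstacle is the $\sqsubseteq$-minimality of the multiplicity vector $m$: one has to carry out carefully the block-by-block bookkeeping of which Graver summands to retain, and it is precisely the conformality of the decompositions that guarantees that a truncated block stays below the original in the partial order. A secondary point to get right is the sign bookkeeping --- ${\mathcal G}(A)$ is closed under negation and the relation matrix carries both $g$ and $-g$ as columns --- but this causes no real difficulty, since domination by a nonnegative vector already forces nonnegativity.
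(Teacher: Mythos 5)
The paper does not prove this proposition; it is quoted from \citet{santos03} (Santos--Sturmfels), so there is no in-paper argument to compare against. Your proof is correct and is essentially the standard argument from that reference: the lower bound via lifting a minimal relation on Graver elements to an element of $\mathcal{G}(A^{(N)})$ of type $N$, and the upper bound via conformal decomposition of each block of $x\in\mathcal{G}(A^{(h)})$ into Graver elements, with the $\sqsubseteq$-minimality of the resulting multiplicity vector forced by the $\sqsubseteq$-minimality of $x$. No gaps.
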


A {\it circuit} of an integer matrix $A$ 
is a nonzero integer vector $x\in\ker_{\mathbb Z}(A)$,
that has inclusion-minimal support with respect to $\ker_{\mathbb
Z}(A)$,
and whose nonzero entries are relatively prime. 
Let ${\mathcal C}(A)$ denote the set of circuits of a matrix $A$. 
Then ${\mathcal C}(A)\subseteq{\mathcal G}(A)$ (cf.\ \cite{MR1363949}).
An integer
relation $h=(h_1,\dots, h_k)$ 
on integer vectors $v^1,\dots,v^k \in {\mathbb Z}^t$ 
\[
0=h_1 v^1 + \dots + h_k v^k
\]
is {\it primitive} if  
$h_1,\dots,h_k$ are relatively prime positive integers and
no $k-1$ of the $\{v^i\}_{i=1}^k$ satisfy any nontrivial linear relation.
By ${\mathcal C}(A)\subseteq {\mathcal G}(A)$ and Proposition
\ref{prop:graver-graver} we have the following result.

\begin{Prop}
\label{prop23}
\cite{berstein09} \ 
Suppose that $h$ is a primitive relation on some set of circuits 
$\{x^i\}_{i=1}^k$ of an 
integer matrix $A$. 
Then the Graver complexity of $A$ 
satisfies $g(A)\geq \sum_{i=1}^k h_i$.
\end{Prop}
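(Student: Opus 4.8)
The plan is to read the inequality off Proposition~\ref{prop:graver-graver}, which says $g(A)=\max\{\|x\|_1:x\in{\mathcal G}({\mathcal G}(A))\}$, the inner $\mathcal G$ being the Graver basis of the matrix whose columns are the Graver basis elements of $A$. Thus it is enough to exhibit one vector in ${\mathcal G}({\mathcal G}(A))$ of $1$-norm at least $\sum_{i=1}^k h_i$, and I would take the relation $h$ itself. Since ${\mathcal C}(A)\subseteq{\mathcal G}(A)$, each circuit $x^i$ is one of the columns of the matrix ${\mathcal G}(A)$, so $h=(h_1,\dots,h_k)$ --- extended by zeros in the coordinates indexing the remaining Graver basis elements --- is a genuine integer vector; from $\sum_i h_ix^i=0$ it lies in $\ker_{\mathbb Z}({\mathcal G}(A))$, and it is nonzero because the $h_i$ are positive. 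Everything then reduces to showing $h\in{\mathcal G}({\mathcal G}(A))$, i.e.\ that $h$ is $\sqsubseteq$-minimal in $\ker_{\mathbb Z}({\mathcal G}(A))\setminus\{0\}$; once that is known, $g(A)\ge\|h\|_1=\sum_i h_i$, the equality because every $h_i>0$.

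For minimality, I would take $h'\in\ker_{\mathbb Z}({\mathcal G}(A))$ with $h'\ne 0$ and $h'\sqsubseteq h$, and show $h'=h$. As $h$ is supported on $\{x^1,\dots,x^k\}$ with all $h_i>0$, the order $\sqsubseteq$ forces $h'$ to be supported on the same columns with $0\le h'_i\le h_i$, and the corresponding relation reads $\sum_{i=1}^k h'_ix^i=0$. If some $h'_i$ were $0$, the surviving $h'_j$ (not all zero) would be a nontrivial linear relation among at most $k-1$ of $x^1,\dots,x^k$, contradicting primitivity; hence $h'_i>0$ for all $i$.

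Finally I would rule out a shrunken proportional copy. Primitivity makes any $k-1$ of $x^1,\dots,x^k$ linearly independent, so the relation space $\{c\in{\mathbb Q}^k:\sum_i c_ix^i=0\}$ has dimension at most $1$ --- two independent relations could be combined to kill one coordinate and leave a nontrivial relation among $k-1$ of the $x^i$ --- and as it contains $h\ne 0$ it equals ${\mathbb Q}h$. So $h'=\lambda h$ with $\lambda\in(0,1]$ rational; writing $\lambda=p/q$ in lowest terms, integrality of all $h'_i=ph_i/q$ together with $\gcd(h_1,\dots,h_k)=1$ forces $q=1$, so $\lambda$ is a positive integer that is $\le 1$, i.e.\ $\lambda=1$ and $h'=h$. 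This gives minimality and hence the proposition. I expect the only slightly delicate steps to be the two invocations of the ``no $k-1$'' clause --- once to forbid a relation on a proper sub-collection of the $x^i$ and once to bound the dimension of the relation space --- and the little divisibility argument forcing $\lambda=1$; the rest is a direct unwinding of the definitions and Proposition~\ref{prop:graver-graver}.
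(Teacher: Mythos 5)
Your proposal is correct and is essentially the paper's own argument, which the paper compresses into the one-line remark that ${\mathcal C}(A)\subseteq{\mathcal G}(A)$ together with Proposition \ref{prop:graver-graver} yields the bound (citing \citet{berstein09}): you view $h$, padded with zeros, as an element of $\ker_{\mathbb Z}$ of the matrix whose columns are the Graver basis elements, use primitivity to verify its $\sqsubseteq$-minimality, and then read off $g(A)\geq \|h\|_1=\sum_{i=1}^k h_i$. Your writeup just supplies the details (support, one-dimensionality of the relation space, and the gcd step) that the paper leaves to the cited reference.
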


In this paper we consider the Graver complexity of the incidence
matrix $A_{t,r}$  for the complete bipartite graph $K_{t,r}$.
Let ${\bf 1}_t=(1,1,\dots,1)$ denote the $1\times t$ matrix 
consisting of $1$'s.  Then the $r$-th Lawrence lifting $A_{t,r}={\bf 1}_t^{(r)}$
of ${\bf 1}_t$ is the incidence matrix of $K_{t,r}$.  
In algebraic statistics, $A_{t,r}$ is the design
matrix specifying the row sums and the column sums of a two-way contingency
table.  
Another Lawrence lifting $({\bf 1}_t^{(r)})^{(h)}$ of ${\bf 1}_t^{(r)}$
is the design matrix for no-three-factor interaction model for  
$t\times r\times h$ three-way contingency tables (\cite{aoki03}, \cite{hosten07}). 
It is also the coefficient matrix
for the three-way transportation program.  The Graver complexity 
$g(A_{t,r})=g({\bf 1}_t^{(r)})$ gives the bound of complexity of the Graver
basis for the toric ideal associated with the 
no-three-factor interaction model for
$t\times r\times h$ three-way contingency tables as $h\rightarrow\infty$.

We employ below the following notation, where $t,r$ are positive integers.
Let 
\begin{equation}
 V:=\{v_1,\dots,v_t\}, \ U:=\{u_1,\dots,u_r\}.
\end{equation}
Then $V\oplus U$ and $V\times U$ denote 
the set of vertices and the set of edges of the complete bipartite graph
$K_{t,r}$, respectively. They 
index the rows and the columns of the incidence matrix $A_{t,r}$ of  $K_{t,r}$.
Here we explain interpretations of a circuit of $A_{t,r}$ 
referring to \cite{berstein09}.
We interpret each vector $x\in{\mathbb Z}^{V\times U}$ as:
\begin{enumerate}
 \item an integer valued function on the set of edges $V \times U$;
 \item a $t\times r$ matrix with its rows and columns indexed
 by V and U. 
\end{enumerate}
With these interpretations, $x$ is in ${\mathcal C}(A_{t,r})$ if and
only if:
\begin{enumerate}
 \item as a function on $V\times U$ with the following properties: 
  its support is a circuit of $K_{t,r}$, 
  along which its values $\pm 1$  alternate. It can be expressed 
 by the sequence
$(v_{i_1},u_{i_1},v_{i_2},u_{i_2},\dots,v_{i_l},u_{i_l})$
 of vertices of the 
circuit of $K_{t,r}$ on which it is supported, with the convention
that its value is $+1$ on the first edge $(v_{i_1},u_{i_1})$.
 \item as a nonzero matrix with the following properties: its elements are $0,\pm 1$, its row sums and columns sums are zeros, 
and it has an inclusion-minimal support with respect to 
these properties.
 
\end{enumerate}

The following example is the base case for our inductive argument
for the lower bound of the Graver complexity.

\begin{table}[hbtp]
\caption{Circuits for $A_{3,4}$ in a $3\times 4$ matrix form}
\begin{center}
\begin{tabular}{c|cccc|c} 
 & $u_1$ & $u_2$ & $u_3$ & $u_4$ &  \\\hline
 & $0$ & $0$ & $-1$ & $1$ & $v_1$ \\
$x^1=(v_1,u_4,v_3,u_2,v_2,u_3)=$ & $0$ & $-1$ & $1$ & $0$ & $v_2$ \\
 & $0$ & $1$ & $0$ & $-1$ & $v_3$ \\\hline
 & $-1$ & $1$ & $0$ & $0$ & $v_1$ \\
$x^2=(v_1,u_2,v_3,u_3,v_2,u_1)=$ & $1$ & $0$ & $-1$ & $0$ & $v_2$ \\
 & $0$ & $-1$ & $1$ & $0$ & $v_3$ \\\hline
 & $0$ & $-1$ & $0$ & $1$ & $v_1$ \\
$x^3=(v_1,u_4,v_2,u_1,v_3,u_2)=$ & $1$ & $0$ & $0$ & $-1$ & $v_2$ \\
 & $-1$ & $1$ & $0$ & $0$ & $v_3$ \\\hline
 & $-1$ & $0$ & $0$ & $1$ & $v_1$ \\
$x^4=(v_1,u_4,v_2,u_2,v_3,u_1)=$ & $0$ & $1$ & $0$ & $-1$ & $v_2$ \\
 & $1$ & $-1$ & $0$ & $0$ & $v_3$ \\\hline
 & $1$ & $0$ & $-1$ & $0$ & $v_1$ \\
$x^5=(v_1,u_1,v_2,u_2,v_3,u_3)=$ & $-1$ & $1$ & $0$ & $0$ & $v_2$ \\
 & $0$ & $-1$ & $1$ & $0$ & $v_3$ \\\hline
 & $0$ & $-1$ & $1$ & $0$ & $v_1$ \\
$x^6=(v_1,u_3,v_2,u_4,v_3,u_2)=$ & $0$ & $0$ & $-1$ & $1$ & $v_2$ \\
 & $0$ & $1$ & $0$ & $-1$ & $v_3$ \\\hline
 & $0$ & $1$ & $0$ & $-1$ & $v_1$ \\
$x^7=(v_1,u_2,v_2,u_3,v_3,u_4)=$ & $0$ & $-1$ & $1$ & $0$ & $v_2$ \\
 & $0$ & $0$ & $-1$ & $1$ & $v_3$ \\\hline
\end{tabular}
\end{center}
\label{sevent}
\end{table}

\begin{Example}\label{seven}\cite{berstein09} \ 
Let
$t=3$ and  
$r=4$.
Consider seven circuits $\{x^i\}_{i=1}^7$ in Table \ref{sevent} (written in a $3\times 4$ 
matrix form) of $A_{3,4}=(1,1,1)^{(4)}$.
They satisfy a primitive relation
\begin{equation}\nonumber
 x^1+2x^2+3x^3+3x^4+5x^5+6x^6+7x^7=0.
\end{equation}
Therefore from Proposition \ref{prop23}
\begin{equation}\nonumber
{g}(A_{3,4})\geq
1+2+3+3+5+6+7=27.
\end{equation}
\end{Example}

\section{Proof of the main theorem}
\label{sec:proof}
In this section we give a proof of our main theorem.
Our proof is based on  recursive 
construction of  primitive relations  for circuits of $A_{t,r}$.
We need recursions for $t$ and $r$, separately.  In Lemma
\ref{ttot1} we give a recursion for $t$ and in 
Lemma \ref{rtor1} we give a recursion for $r$.

\begin{Lemma}\label{ttot1}
Let $t\geq 4$. 
Suppose that there are circuits 
$\{x^i\}_{i=1}^k$ of
 $A_{t,t+1}={\bf 1}_t^{(t+1)}$ 
admitting a primitive relation $h$, where the $k$-th circuit and the $k$-th
coefficient are
\begin{align}\nonumber
 x^k&=(v_1,u_2,v_2,u_3,\dots,v_t,u_{t+1}),\\ \nonumber
h_k&=1. 
\end{align}
Then there are circuits $\{\bar{x}^i\}_{i=1}^{k+t}$
of $A_{t+1,t+1}={\bf 1}_{t+1}^{(t+1)}$
admitting a primitive relation 
$\bar{h}$, where the $(k+t)$-th circuit and the $(k+t)$-th coefficient are
\begin{align}\nonumber
 \bar{x}^{k+t}&=(v_1,u_1,v_2,u_2,\dots,v_{t+1},u_{t+1}),\\ \nonumber
\bar{h}_{k+t}&=1. 
\end{align}
\end{Lemma}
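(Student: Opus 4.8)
The plan is to lift the given primitive relation on $K_{t,t+1}$ to one on $K_{t+1,t+1}$ by keeping the old circuits (embedded) and replacing $x^k$ by the target circuit $\bar x^{k+t}$ together with $t$ new circuits that all thread through the added vertex $v_{t+1}$. First I would embed each $x^i$ into $\mathbb{Z}^{(t+1)\times(t+1)}$ by appending a zero $v_{t+1}$-row; since its support is still a cycle of $K_{t,t+1}\subseteq K_{t+1,t+1}$, the embedded $x^i$ is a circuit of $A_{t+1,t+1}$, and $x^1,\dots,x^{k-1}$ remain linearly independent. Put $\bar x^{k+t}:=(v_1,u_1,v_2,u_2,\dots,v_{t+1},u_{t+1})$ and let $D:=\bar x^{k+t}-x^k$ (with $x^k$ embedded); a direct computation shows that $D$ is the band matrix with $D_{v_1,u_1}=1$, $D_{v_1,u_2}=-1$, with $D_{v_i,u_{i-1}}=-1$, $D_{v_i,u_i}=2$, $D_{v_i,u_{i+1}}=-1$ for $2\le i\le t$, and with $D_{v_{t+1},u_t}=-1$, $D_{v_{t+1},u_{t+1}}=1$.

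The combinatorial heart is to exhibit $t$ circuits $\bar x^{k},\dots,\bar x^{k+t-1}$ of $A_{t+1,t+1}$, each passing through $v_{t+1}$, with $\bar x^{k}+\dots+\bar x^{k+t-1}=-D$, equivalently $\bar x^{k}+\dots+\bar x^{k+t}=x^k$. These are to be obtained by rerouting the $2t$-cycle $x^k$ through $v_{t+1}$ in $t$ successive steps, arranged so that the $v_{t+1}$-rows $w^0,\dots,w^t$ of $\bar x^{k},\dots,\bar x^{k+t}$ are, up to sign, the consistently oriented edge vectors of a single Hamiltonian cycle on the column set $\{u_1,\dots,u_{t+1}\}$; note that $w^t$ is forced to be the edge vector of $\{u_t,u_{t+1}\}$ and that $w^0+\dots+w^t=0$ automatically, since $x^k$ has zero $v_{t+1}$-row. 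Then I would set $\bar x^i:=x^i$ and $\bar h_i:=h_i$ for $i\le k-1$ and $\bar h_{k+j}:=1$ for $0\le j\le t$. The relation holds, since $\sum_{i=1}^{k+t}\bar h_i\bar x^i=\sum_{i=1}^{k-1}h_ix^i+\sum_{j=0}^{t}\bar x^{k+j}=\sum_{i=1}^{k-1}h_ix^i+x^k=\sum_{i=1}^{k}h_ix^i=0$ by the original relation and $h_k=1$; the $\bar h_i$ are positive and relatively prime (as $\bar h_{k+t}=1$); and $\bar x^{k+t}$, $\bar h_{k+t}$ have the stated forms.

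It then remains to check the linear-independence clause of primitivity, for which I would prove the stronger statement that every linear relation among $\bar x^1,\dots,\bar x^{k+t}$ is a scalar multiple of $\bar h$; this suffices because $\bar h$ has full support. Given $\sum_{i=1}^{k+t}c_i\bar x^i=0$, restrict to the $v_{t+1}$-row: the embedded $x^i$ contribute nothing, so $\sum_{j=0}^{t}c_{k+j}w^j=0$, and since $w^0,\dots,w^t$ are the oriented edges of a single cycle the only such relation is the constant one, whence $c_k=\dots=c_{k+t}=:c$. Therefore $\sum_{i=1}^{k-1}c_ix^i+c\,x^k=0$; substituting $x^k=-\sum_{i=1}^{k-1}h_ix^i$ and using linear independence of $x^1,\dots,x^{k-1}$ gives $c_i=c\,h_i$, so $(c_i)=c\,\bar h$.

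The main obstacle is the explicit construction in the second step. The requirement that $w^0,\dots,w^t$ span the $t$-dimensional space of zero-sum column vectors forces every new circuit to meet $v_{t+1}$: a new circuit with zero $v_{t+1}$-row lies in the span of the embedded old circuits whenever those already exhaust $\ker_{\mathbb{R}}A_{t,t+1}$, which would at once destroy primitivity. So one cannot make do with short cycles inside $K_{t,t+1}$; each of the $t$ reroutes must genuinely pass through $v_{t+1}$, and one must verify simultaneously that their sum equals $-D$ and that their $v_{t+1}$-footprints, together with that of $\bar x^{k+t}$, close up into one cycle rather than several.
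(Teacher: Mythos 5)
Your overall architecture matches the paper's: keep the old circuits (embedded with a zero $v_{t+1}$-row), add $t+1$ new circuits through $v_{t+1}$ whose sum is $x^k$, give each of them the coefficient $h_k=1$, verify the relation by telescoping, and prove primitivity by restricting a putative relation to the $v_{t+1}$-row --- which forces the last $t+1$ coefficients to coincide because those rows are the oriented edges of a single cycle on the column set --- and then invoking primitivity of $h$. All of those verification steps are sound as stated.

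The gap is that the construction you yourself call ``the main obstacle'' is never carried out, and in the coordinates you have chosen it is genuinely awkward. Because you insist on producing $\bar x^{k+t}=(v_1,u_1,v_2,u_2,\dots,v_{t+1},u_{t+1})$ in its final labeling while keeping $x^k=(v_1,u_2,\dots,v_t,u_{t+1})$ unpermuted, the difference $-D=x^k-\bar x^{k+t}$ has an entry $\mp 2$ in every row $v_2,\dots,v_t$; hence each of those rows must be hit by at least two of your $t$ new circuits, and you must simultaneously arrange the correct sum, the circuit property of each summand, and the single-cycle condition on the $v_{t+1}$-rows. None of this is exhibited or checked, and it is exactly where the content of the lemma lies. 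The paper sidesteps the difficulty by building the long new circuit in a different labeling, $y^{k+t}=(v_1,u_2,v_2,u_3,\dots,v_t,u_{t+1},v_{t+1},u_1)$, i.e., inserting $v_{t+1}$ into $x^k$ at the edge $(v_1,u_{t+1})$: then $x^k-y^{k+t}$ is supported on rows $v_1$ and $v_{t+1}$ only and telescopes as the sum of the $t$ four-cycles $y^{k+j-1}=(v_1,u_j,v_{t+1},u_{j+1})$, $j=1,\dots,t$, whose $v_{t+1}$-rows together with that of $y^{k+t}$ trace the cycle $u_1\to u_2\to\dots\to u_{t+1}\to u_1$; a cyclic permutation of the columns applied to all $k+t$ circuits at the very end turns $y^{k+t}$ into the required $\bar x^{k+t}$ without disturbing the relation or its coefficients. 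To complete your argument you must either exhibit your $t$ reroutes explicitly and verify the three conditions, or adopt this permute-at-the-end device.
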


\begin{proof}
Using the natural embedding of 
 $K_{t,t+1}$ into $K_{t+1,t+1}$,
we can interpret circuits of the former also as circuits of the latter. 
Put
\begin{equation}\nonumber
 y^i=x^i, \ \forall i=1,\dots,k-1,
\end{equation}
and define 
\begin{align}\nonumber
 y^{k+j-1}&=(v_1,u_j,v_{t+1},u_{j+1}) , \ \forall j=1,\dots,t, \\ \nonumber
 y^{k+t}&=(v_1,u_2,v_2,u_3,\dots,v_{t},u_{t+1},v_{t+1},u_{1}). 
\end{align}
Table \ref{ttot1mat} displays 
$\{y^{k+j-1}\}_{j=1}^{t+1}$ as matrices, where
\begin{equation}\nonumber
 p=
(1,0,\dots,0,-1)^{\top}
\in{\mathbb Z}^{t+1}. 
\end{equation}
Blank entries are zeros.
Note that these circuits satisfy 
$\sum_{j=1}^{t+1}y^{k+j-1}=x^k$. 

\begin{table}[htbp]
\caption{Circuits for recursion on $t$}
\begin{center}
\begin{tabular}{c|cccccc|} 
 & $u_1$ & $u_2$ & $u_3$ & $\dots$ & $u_{t}$ & $u_{t+1}$ \\\hline
$y^{k}=(v_1,u_{1},v_{t+1},u_{2})=$ & $p$ & $-p$ &  &  &  &  \\\hline
$y^{k+1}=(v_1,u_{2},v_{t+1},u_{3})=$ &  & $p$ & $-p$ &  &  &  \\\hline
$\vdots$ &  &  & $\ddots$  & $\ddots$ &  &  \\\hline
$y^{k+t-1}=(v_1,u_{t},v_{t+1},u_{t+1})=$ &  &  &  &   $p$ & $-p$ & \\\hline
$y^{k+t-1}=(v_1,u_{t},v_{t+1},u_{t+1})=$ &  &  &  &  & $p$ & $-p$ \\\hline
 & $-1$ & $1$ &  &  &  &  \\
 &  & $-1$ & $1$ &  &  &  \\
$y^{k+t}=(v_1,u_2,v_2,u_3,\dots,v_{t},u_{t+1},v_{t+1},u_{1})= $
 &  &  & $\ddots$ & $\ddots$ &  &  \\
 &  &  &  &   $-1$ & $1$ &\\
 &  &  &  &  & $-1$ & $1$ \\
 & $1$ &  &  &  &  & $-1$ \\\hline
\end{tabular}
\end{center}
\label{ttot1mat}
\end{table}

Suppose that $\bar{h}\in{\mathbb Z}^{k+t}$ satisfies
\begin{align}\nonumber
 \bar{h}_i &=  h_i,  \ \forall i=1,\dots,k-1, \\ \nonumber
\bar{h}_{k+j-1} &=  h_k,  \ \forall j=1,\dots,t+1. 
\end{align}
Then 
\[
\sum_{i=1}^{k+t}\bar{h}_iy^i 
= \sum_{i=1}^{k-1}h_iy^i +
 \sum_{j=1}^{t+1}h_ky^{k+j-1} 
= \sum_{i=1}^{k-1}h_ix^i +h_kx^k 
= 0. 
\]
Therefore $\bar{h}$ is an integer relation of circuits $\{y^i\}_{i=1}^{k+t}$. 

Next, we show that $\bar{h}$ is primitive. 
Suppose that  
$h'\in{\mathbb Z}^{k+t}$
is a nontrivial relation on the 
$\{y^i\}_{i=1}^{k+t}$. 
Without loss of generality we may assume that the  
$\{h'_i\}_{i=1}^{k+t}$ are relatively prime integers,
at least one of which is positive. 
We look at the row of $v_{t+1}$. Then it follows that
\begin{equation}\nonumber
 h'_{k}=h'_{k+1}=\dots=h'_{k+t}. 
\end{equation}
Therefore
\[
0=\sum_{i=1}^{k+t}h'_iy^i 
= \sum_{i=1}^{k-1}h'_iy^i +
 h'_k\sum_{j=1}^{t+1}y^{k+j-1} 
= \sum_{i=1}^{k-1}h'_ix^i + h'_kx^k. 
\]
This is an integer relation on $\{x^i\}_{i=1}^k$, and because $h$ is  primitive,
\[
 h'_i =  h_i,  \ \forall i=1,\dots,k . 
\]
Therefore $h'=\bar{h}$ and $\bar{h}$ is primitive. 

Now apply to $\{y^i\}_{i=1}^{k+t}$ a permutation of columns
so that $y^{k+t}$ becomes  $(v_1,u_1,v_2,u_2,\dots,v_{t+1},u_{t+1})$. 
For $i=1,\dots,k+t$, let $\bar{x}^i$ be the circuit of
$A_{t+1,t+1}$ which is the image of $y^i$ under this permutation. 
Then $\{\bar{x}^i\}_{i=1}^{k+t}$ also satisfy the primitive relation
$\sum_{i=1}^{k+t}\bar{h}_i\bar{x}^i=0$ with the same coefficients 
$\bar{h}$. This completes the proof. 
\end{proof}


\begin{Lemma}\label{rtor1}
Let $r\geq t \geq 4$. 
Suppose that there are circuits 
$\{x^i\}_{i=1}^k$  of  $A_{t,r}={\bf 1}_t^{(r)}$
admitting a primitive relation $h$, where the $k$-th circuit and the $k$-th
coefficient are
\begin{align}\nonumber
 x^k&=(v_1,u_{r-t+1},v_2,u_{r-t+2},\dots,v_t,u_{r}),\\ \nonumber
h_k&=1. 
\end{align}
Then there are circuits $\{\bar{x}^i\}_{i=1}^{k+t-1}$
of 
$A_{t,r+1}={\bf 1}_t^{(r+1)}$
admitting primitive relation 
$\bar{h}$, where the $(k+t-1)$-th circuit is 
\[
\bar{x}^{k+t-1} = (v_1,u_{r-t+2},v_2,u_{r-t+3},\dots,v_t,u_{r+1})
\]
and the elements of $\bar{h}$ are
\[
 \bar{h}_{i}=(t-1)h_i, \ \forall i=1,\dots,k-1,\\  \ 
 \bar{h}_k= \bar{h}_{k+1}= \dots= \bar{h}_{k+t-1}= h_k = 1. 
\]
\end{Lemma}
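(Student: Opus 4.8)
The plan is to mimic the structure of the proof of Lemma \ref{ttot1}, but now splitting the last circuit $x^k$ into $t$ new circuits (hence the index shift by $t-1$) while simultaneously rescaling the old coefficients by a factor of $t-1$. First I would embed $K_{t,r}$ into $K_{t,r+1}$ in the obvious way (adding the single new column vertex $u_{r+1}$), so that each old circuit $x^i$ is reinterpreted as a circuit of $A_{t,r+1}$; set $y^i = x^i$ for $i=1,\dots,k-1$. Then I would introduce $t$ new circuits supported on the last $t+1$ column vertices $u_{r-t+1},\dots,u_{r+1}$: one of them, call it $y^{k+t-1}$, is the ``shifted diagonal'' $(v_1,u_{r-t+2},v_2,u_{r-t+3},\dots,v_t,u_{r+1})$ that will serve as the new distinguished circuit, and the remaining $t-1$ circuits $y^k,\dots,y^{k+t-2}$ are short circuits (each a four-cycle on two $v$-vertices and two $u$-vertices) chosen so that the telescoping identity
\[
x^k = y^{k+t-1} + \sum_{j=0}^{t-2} y^{k+j}
\]
holds, in analogy with $\sum_{j=1}^{t+1} y^{k+j-1} = x^k$ in Lemma \ref{ttot1}. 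I would display these in a table analogous to Table \ref{ttot1mat} to make the telescoping transparent. The delicate point compared with the $t$-recursion is that the splitting here is not of the ``translate by a fixed vector $p$'' type: because we are not adding a new row vertex, the short circuits must be genuine $\pm 1$ four-cycles, and I expect the natural choice is something like $y^{k+j} = (v_{j+1}, u_{r-t+1+j}, v_{j+2}, u_{r-t+2+j})$ or a similar chain of transpositions; getting this combinatorial bookkeeping exactly right, so that the sum telescopes to $x^k$ and every $y^i$ is actually a circuit of $A_{t,r+1}$, is the main obstacle.

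Granting the identity $x^k = \sum_{j=0}^{t-1} y^{k+j}$, verifying that $\bar h$ as defined is an integer relation is a short computation:
\[
\sum_{i=1}^{k+t-1}\bar h_i y^i
= (t-1)\sum_{i=1}^{k-1} h_i x^i + \sum_{j=0}^{t-1} y^{k+j}
= (t-1)\sum_{i=1}^{k-1} h_i x^i + x^k.
\]
This is not obviously zero, since $h$ only gives $\sum_{i=1}^{k-1} h_i x^i + x^k = 0$ (using $h_k=1$), so $(t-1)\sum_{i=1}^{k-1} h_i x^i + x^k = -(t-1)x^k + x^k = -(t-2)x^k \ne 0$ in general. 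Hence the simple telescoping above cannot be the whole story: the correct construction must instead arrange that the $t-1$ short circuits $y^k,\dots,y^{k+t-2}$ together contribute $(t-2)x^k$ worth of ``correction'', i.e. the right identity is something like $\sum_{j=0}^{t-2} y^{k+j} \;=\; (t-2)x^k + (\text{the remaining diagonal } y^{k+t-1})$, equivalently $y^{k+t-1} + \sum_{j=0}^{t-2} y^{k+j} = (t-1)x^k$. Reverse-engineering from $\sum_{i=1}^{k+t-1}\bar h_i y^i = (t-1)\sum_{i=1}^{k-1}h_i x^i + (t-1)x^k = (t-1)\cdot 0 = 0$, the combinatorial requirement is precisely
\[
y^{k} + y^{k+1} + \dots + y^{k+t-1} = (t-1)\,x^k,
\]
and this is the identity I would actually aim to realize with explicit four-cycles; this is where the real work lies, and I would construct the $y^{k+j}$ so that, column by column on $u_{r-t+1},\dots,u_{r+1}$, the $\pm 1$ patterns sum to $(t-1)$ copies of the diagonal pattern of $x^k$.

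For primitivity I would argue exactly as in Lemma \ref{ttot1}: suppose $h'$ is a nontrivial relation on $\{y^i\}_{i=1}^{k+t-1}$ with relatively prime entries, at least one positive. Inspecting the entries of the matrices in the rows and columns that distinguish the new short circuits (the analogue of ``looking at the row of $v_{t+1}$'') should force $h'_k = h'_{k+1} = \dots = h'_{k+t-1}$; call this common value $c$. Substituting and using $\sum_{j=0}^{t-1} y^{k+j} = (t-1)x^k$ collapses the relation to $\sum_{i=1}^{k-1} h'_i x^i + c(t-1)x^k/(t-1)$... more precisely to an integer relation on $\{x^i\}_{i=1}^{k}$ of the form $\sum_{i=1}^{k-1} h'_i x^i + c\,x^k = 0$ after dividing through appropriately, and primitivity of $h$ (together with $h_k=1$) then forces $c=1$ and $h'_i = h_i$ for $i<k$, whence $h'_i = (t-1)h_i = \bar h_i$ for $i<k$ and $h'_i = 1 = \bar h_i$ for $i \ge k$; the greatest common divisor of the $\bar h_i$ is then $\gcd((t-1)\cdot 1, 1) = 1$, so $\bar h$ is primitive. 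Finally, as in Lemma \ref{ttot1}, no relabelling of columns is needed beyond noting that $\bar x^i := y^i$ already has $\bar x^{k+t-1}$ in the required form, which completes the proof. I expect the only genuinely nontrivial verification to be the explicit realization of the identity $\sum_{j=0}^{t-1} y^{k+j} = (t-1)x^k$ by honest circuits and the corresponding ``inspect the distinguishing rows/columns'' step for primitivity.
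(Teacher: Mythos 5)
You correctly reverse--engineer the identity that the construction must satisfy, namely $y^{k}+y^{k+1}+\dots+y^{k+t-1}=(t-1)x^k$ with all $y^{k+j}$ genuine circuits of $A_{t,r+1}$, and this is indeed the identity the paper's proof is built on. But you leave the construction of these circuits --- which is the actual content of the lemma --- unrealized, and the ansatz you propose cannot work. You take one long circuit (the shifted diagonal, of $\ell_1$-norm $2t$) together with $t-1$ four-cycles (each of $\ell_1$-norm $4$). By the triangle inequality their sum has $\ell_1$-norm at most $4(t-1)+2t=6t-4$, whereas $(t-1)x^k$ has $\ell_1$-norm $2t(t-1)$, and $2t(t-1)>6t-4$ for every $t\ge 4$. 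So no choice of four-cycles can realize the required identity: the ``combinatorial bookkeeping'' you defer is not bookkeeping but an impossibility for your proposed shape. The paper instead takes all $t$ new circuits to be full $2t$-cycles: $y^{k+j-1}$ is obtained from $x^k$ by replacing the single column vertex $u_{r-j+1}$ by the new vertex $u_{r+1}$, for $j=1,\dots,t$. With that choice each old column of the support of $x^k$ receives its $x^k$-pattern from exactly $t-1$ of the new circuits and $0$ from the remaining one, while the new column $u_{r+1}$ receives the vectors $q^1,\dots,q^t$ (where $q^j_j=1$, $q^j_{j+1}=-1$ cyclically), which sum to zero; this yields $\sum_{j=1}^{t}y^{k+j-1}=(t-1)x^k$.

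Your primitivity sketch also needs repair at the collapse step. Granting $h'_k=\dots=h'_{k+t-1}$ (in the paper this is forced by the column $u_{r+1}$, where the old circuits vanish and the new ones contribute $q^1,\dots,q^t$, any $t-1$ of which are linearly independent), the relation collapses to $\sum_{i=1}^{k-1}h'_ix^i+(t-1)h'_kx^k=0$; you cannot simply ``divide through appropriately'' by $t-1$, since $t-1$ need not divide the $h'_i$. The correct argument is that, by primitivity of $h$, this relation must be an integer multiple $\alpha h$, i.e.\ $h'_i=\alpha h_i$ for $i<k$ and $(t-1)h'_k=\alpha$; positivity of the $h_i$ and of some $h'_i$ gives $\alpha>0$, hence every $h'_i$ is nonzero, which is exactly what primitivity of $\bar h$ requires. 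Your chain ``$c=1$ and $h'_i=h_i$ for $i<k$, whence $h'_i=(t-1)h_i$'' is internally inconsistent and should be replaced by this multiple-of-$h$ argument.
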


\begin{proof}
Using the natural embedding of 
 $K_{t,r}$ into $K_{t,r+1}$,
we can interpret circuits of the former also as circuits of the latter. 
Put
\begin{equation}\nonumber
 y^i=x^i, \ \forall i=1,\dots,k-1,
\end{equation}
and for all 
$j=1,\dots,t$,  let $y^{k+j-1}$ denote vectors obtained by
changing vertex $u_{r-j+1}$ of
$x^k$ to $u_{r+1}$. 
Table \ref{rtor1mat} displays these circuits as matrices. 
Here for each $i=1,\dots,t$, 
$q^i\in{\mathbb Z}^{t}$ denotes a vector satisfying
\begin{equation}\nonumber
 q^i_i=1,\ q^i_{i+1}=-1,  
\end{equation}
and the rest are zeros. Here we identify  $t+1$ with $1$.

\begin{table}[htbp]
\caption{Circuits for recursion on $r$}
\vspace*{-3mm}
\begin{center}
\setlength{\tabcolsep}{1pt}
\begin{tabular}{c|ccccccc|} 
 & $\ \dots$ & $u_{r-t+1}$ & $u_{r-t+2}$ & $u_{r-t+3}$ & $\dots$ & $u_{r}$ & $u_{r+1}$ \\\hline
$y^{k}=(v_1,u_{r+1},v_2,u_{r-t+2},\dots,v_{t-1},u_{r-1},v_t,u_r)=$ & $\dots$ & $0$ & \multicolumn{1}{c}{$q^2$} & \multicolumn{1}{c}{$q^3$} & $\dots$ & $q^{t}$ & $q^1$ \\\hline
$y^{k+1}=(v_1,u_{r-t+1},v_2,u_{r+1},\dots,v_{t-1},u_{r-1},v_t,u_r)=$  & $\dots$ & $q^1$ & \multicolumn{1}{c}{$0$} & \multicolumn{1}{c}{$q^3$} & $\dots$ & $q^{t}$ & $q^2$ \\\hline
$\vdots$ &  &  & \multicolumn{1}{c}{} & \multicolumn{1}{c}{} & $\vdots$ &  &  \\\hline
$y^{k+t-2}=(v_1,u_{r-t+1},v_2,u_{r-t+2},\dots,v_{t-1},u_{r+1},v_t,u_r)=$ & $\dots$ & $q^1$ & \multicolumn{1}{c}{$q^2$} & \multicolumn{1}{c}{$q^3$} & $\dots$ & $q^{t}$ & $q^{t-1}$ \\\hline
$y^{k+t-1}=(v_1,u_{r-t+1},v_2,u_{r-t+2},\dots,v_{t-1},u_{r-1},v_t,u_{r+1})=$ & $\dots$ & $q^1$ & \multicolumn{1}{c}{$q^2$} & \multicolumn{1}{c}{$q^3$} & $\dots$ & $0$ & $q^{t}$ \\\hline
\end{tabular}
\end{center}
\label{rtor1mat}
\end{table}

%
Notice that 
\begin{equation}\nonumber
\sum_{j=1}^{t} y^{k+j-1} = (t-1)x^k. 
\end{equation}
Define 
\begin{align}\nonumber
 \bar{h}_i &= (t-1) h_i,  \ \forall i=1,\dots,k-1, \\ \nonumber
\bar{h}_{k+j-1} &=  h_k = 1,  \ \forall j=1,\dots,t+1.
\end{align}
Then
\[
\sum_{i=1}^{k+t-1}\bar{h}_iy^i 
= \sum_{i=1}^{k-1}(t-1)h_iy^i +
\sum_{j=1}^{t}h_ky^{k+j-1} 
= \sum_{i=1}^{k-1}h_ix^i +h_kx^k 
= 0. 
\]
Therefore $\bar{h}$ is an integer relation on circuits 
$\{y^i\}_{i=1}^{k+t-1}$. 

Next we show that $\bar{h}$ is  primitive. 
Suppose that 
$h'\in{\mathbb Z}^{k+t-1}$
is a nontrivial relation on the 
$\{y^i\}_{i=1}^{k+t-1}$. 
Without loss of generality we may assume that 
$\{h'_i\}_{i=1}^{k+t-1}$ are relatively prime integers,
at least one of which is positive. 
Consider the column of
$u_{r+1}$.  Then
\begin{equation}\nonumber
 h'_{k}=h'_{k+1}=\dots=h'_{k+t-1}. 
\end{equation}
Therefore
\[
0=\sum_{i=1}^{k+t-1}h'_iy^i
= \sum_{i=1}^{k-1}h'_iy^i +
 h'_k\sum_{j=1}^{t}y^{k+j-1} 
= \sum_{i=1}^{k-1}h'_ix^i + (t-1)h'_kx^k. 
\]
This is an integer relation on $\{x^i\}_{i=1}^k$. 
Therefore there exists  $\alpha\in{\mathbb Z}$ such that
\begin{align}\label{31}
 {h}'_i &= \alpha h_i, \ \forall i=1,\dots,k-1, \\ 
\label{32}
(t-1)h'_k &= \alpha h_k=\alpha. 
\end{align}
Since $h_i>0$ for all $i$ and there is an $i$ such that $h'_i>0$, 
equations (\ref{31}) and (\ref{32}) imply 
$\alpha>0$. 
Therefore (\ref{31}) and (\ref{32}) 
imply that $h'_i>0$ for all
$i$. 
Hence $\bar{h}$ is primitive. 

Now apply to $\{y^i\}_{i=1}^{k+t-1}$ a permutation of columns
so that $y^{k+t-1}$ becomes 
$(v_1,u_{r-t+2},v_2,u_{r-t+3},\allowbreak\dots,v_t,u_{r+1})$. 
For $i=1,\dots,k+t-1$, let $\bar{x}^i$ be the circuit of
$A_{t,r+1}$ which is the image of $y^i$ under this permutation. 
Then $\{\bar{x}^i\}_{i=1}^{k+t-1}$ also satisfy the primitive relation
$\sum_{i=1}^{k+t-1}\bar{h}_i\bar{x}^i=0$ with the same coefficients 
$\bar{h}$. This completes the proof. 
\end{proof}

We are now ready to prove Theorem \ref{thm:main}.
In the proof we use the following notation.
Let ${\mathscr A}(\{x^i\}_{i=1}^k)=\{\bar{x}^i\}_{i=1}^{k+t}$ and
${\mathscr B}(h)=\bar{h}=(\bar{h}_1,\dots,\bar{h}_{k+t-1},1)$ denote 
circuits of $A_{t+1,t+1}$ and the primitive relation  which are
obtained by the operation 
of Lemma \ref{ttot1} to
circuits $\{x^i\}_{i=1}^k$ of $A_{t,t+1}$  and  the primitive relation $h$. 
Note that 
$\| {\mathscr B}(h) \|_1
=\| h \|_1  + t$. 
Furthermore let ${\mathscr A}'(\{x^i\}_{i=1}^k)=\{\bar{x}^i\}_{i=1}^{k+t-1}$ and
${\mathscr B}'(h)=\bar{h}=(\bar{h}_1,\dots,\bar{h}_{k+t-2},1)$ denote 
circuits of $A_{t,r+1}$ and the primitive relation 
which are obtained by the operation 
of Lemma \ref{rtor1} to
circuits $\{x^i\}_{i=1}^k$ of $A_{t,r}$  and  the primitive relation $h$. 
Note that 
$\| {\mathscr B}'(h) \|_1
=(t-1)(\| h \|_1 - 1) + t$. 

Our proof uses induction on $t,r$.
We will construct a primitive relation $h^{(t \times r)}$ on circuits 
${\mathscr X}_{(t \times r)}$ of $A_{t,r}$ by induction.
Therefore we obtain $g(A_{t,r})\geq \|h^{(t \times r)}\|_1$.
Our induction is illustrated in Figure \ref{inductionfigure}.
There,  a down arrow corresponds to the operation of Lemma \ref{ttot1}, and
 a right arrow corresponds to the operation of Lemma \ref{rtor1}.

\begin{figure}[htbp]
\centering
\begin{tabular}{ccccccc} 
$\| h ^{(3\times 4)}\|_1 $ & $\rightarrow$ &  $\| h ^{(3\times 5)}\|_1$ & $\rightarrow$ & $\| h ^{(3\times 6)}\|_1$  & $ \rightarrow $ & $ \cdots $\\
$\downarrow$ &  &  &  &  &  &  \\
$\| h ^{(4\times 4)}\|_1 $ & $ \rightarrow $ & $ \| h ^{(4\times 5)}\|_1 $ & $ \rightarrow $ & $ \| h ^{(4\times 6)}\|_1 $ & $ \rightarrow $ & $ \cdots$ \\
 &  & $\downarrow$ &   &  &  &  \\
 &  & $\| h ^{(5\times 5)}\|_1 $ & $ \rightarrow $ & $ \| h ^{(5\times 6)}\|_1 $ & $ \rightarrow $ & $ \cdots$ \\
 &  &  &  & $\downarrow $ &   &  \\
 &  &  &  & $\vdots $ &   &  \\
\end{tabular}
\caption{Induction on $t,r$}
\label{inductionfigure}
\end{figure}

\begin{proof}[Proof of Theorem \ref{thm:main}]
By induction on  $t$ we will prove   that 
for all $t\geq 4$
there exist  $k(t)=t^2-2t+2$
circuits 
${\mathscr X}_{(t \times t)}=\{x_{(t \times t)}^i\}_{i=1}^{k(t)}
\subset{\mathcal C}(A_{t,t})$
and the primitive relation
$h^{(t \times t)}$
such that
\begin{align}\nonumber
  x_{(t\times t)}^{k(t)}&=(v_1,u_1,v_2,u_2,\dots,v_t,u_t) ,\\ \nonumber
 \sum_{i=1}^{k(t)}h^{(t \times t)}_ix^i&=0,\\ \nonumber
h^{(t \times t)}_{k(t)}&=1,\\ \nonumber
 \| h ^{(t \times t)} \|_1&=
(t-2)!\left(15+\sum_{i=1}^{t-4}\frac{i+4}{(i+2)!}\right). 
\end{align}

Exchange  $x^1$ and $x^7$ of circuits of Example \ref{seven} and 
apply to the circuits  
a permutation of vertices so  that
\begin{equation}\nonumber
 x^7=(v_1,u_2,v_2,u_3,v_3,u_4). 
\end{equation}
Let 
${\mathscr X}_{(3\times 4)}=\{x_{(3\times 4)}^i\}_{i=1}^7$
be the image of 
$\{x^i\}_{i=1}^7$
under this permutation. 
The primitive relation $h^{(3\times 4)}$ on these circuits 
satisfy
\begin{equation}\nonumber
 h^{(3\times 4)}=(7,2,3,3,5,6,1). 
\end{equation}
Notice that $h^{(3\times 4)}_7=1$ holds. 

Let ${\mathscr X}_{(4 \times 4)}={\mathscr A}({\mathscr X}_{(3 \times 4)})$ and
$h^{(4 \times 4)}={\mathscr B}(h^{(3 \times 4)})$
denote the image of
${\mathscr X}_{(3 \times 4)}$
and $h^{(3\times 4)}$
under the operation of Lemma \ref{ttot1}.  Then we have
$h^{(4\times 4)}=(7,2,3,3,5,6,1,1,1,1)\in{\mathbb Z}^{10}$
and 
\begin{align}\nonumber
 x_{(4 \times 4)}^{10}&=(v_1,u_1,v_2,u_2,v_3,u_3,v_4,u_4) ,\\ \nonumber
h^{(4 \times 4)}_{10} &=1,\\ \nonumber
 \| h ^{(4 \times 4)} \|_1 &=
 \| h ^{(3 \times 4)} \|_1   + 3 = 30 .
\end{align}
Therefore we have verified the initial condition at $t=4$ 
for the induction.

Suppose now that the result holds for $t\geq 4$.
Let ${\mathscr X}_{(t \times (t+1))}={\mathscr A}'({\mathscr X}_{(t \times t)})$ and 
$h^{(t \times (t+1))}={\mathscr B}'(h^{(t \times t)})$ 
denote the image of
 ${\mathscr X}_{(t \times t)}$ and $h^{(t \times t)}\in{\mathbb Z}^{k(t)}$
under the operation of Lemma \ref{rtor1}. 
\begin{align}\nonumber
 x_{(t \times (t+1))}^{k(t)+t-1}&=(v_1,u_2,v_2,u_3,\dots,v_t,u_{t+1})
  ,\\ \nonumber
h^{(t \times (t+1))}_{k(t)+t-1} &=1,\\ \nonumber
 \| h ^{(t \times (t+1))} \|_1 &=
(t-1)(\| h ^{(t \times t)} \|_1 -1 )   + t 
\end{align}
follows from Lemma \ref{rtor1}.
Now let ${\mathscr X}_{((t+1) \times (t+1))}={\mathscr A}({\mathscr X}_{(t \times (t+1))})$ and 
$h^{((t+1) \times (t+1))}={\mathscr B}(h^{(t \times (t+1))})$ 
denote the image of
${\mathscr X}_{(t \times (t+1))}$ and $h^{(t \times (t+1))}$
under the operation of Lemma \ref{ttot1}.   Then
\begin{align}\nonumber
 x_{((t+1) \times (t+1))}^{k(t)+2t-1}
&=(v_1,u_1,v_2,u_2,\dots,v_{t+1},u_{t+1}) ,\\ \nonumber
h^{((t+1) \times (t+1))}_{k(t)+2t-1} &=1,\\ \nonumber
 \| h ^{((t+1) \times (t+1))} \|_1 &=
(t-1)(\| h ^{(t \times t)} \|_1 -1 )   + 2t\\ \nonumber
 &= (t-1)!\left(15+\sum_{i=1}^{t-4}\frac{i+4}{(i+2)!}\right)+t+1\\ \nonumber
 &= ((t+1)-2)!\left(15+\sum_{i=1}^{(t+1)-4}\frac{i+4}{(i+2)!}\right)
\end{align}
follows from Lemma \ref{ttot1}.
Here $k(t+1)=k(t)+2t-1$ and $k(4)=10$ imply $k(t)=t^2-2t+2$.
Therefore the result holds for $t+1$.
Henceforth, let $b_t= \| h ^{(t \times t)} \|_1 $.

We fix 
$t\geq 4$
arbitrarily.
We prove by induction on $r$ that, for all $r\geq t$, 
there are circuits 
${\mathscr X}_{(t \times r)}=\{x_{(t \times r)}^i\}_{i=1}^{k(t)}
\subset{\mathcal C}(A_{t,r})$
and the primitive relation
$h^{(t \times r)}$
such that
\begin{align}\nonumber
  x_{(t\times r)}^{k(t)}&=(v_1,u_{r-t+1},v_2,u_{r-t+2},\dots,v_t,u_r) ,\\ \nonumber
\sum_{i=1}^{k(t)}h^{(t \times r)}_ix_{(t\times r)}^i&=0
 ,\\ \nonumber
h^{(t \times r)}_{k(t)}&=1,\\ \nonumber
 \| h ^{(t \times r)} \|_1&=
(t-1)^{r-t}\left(b_t+\frac{1}{t-2}\right)-\frac{1}{t-2}.
\end{align}
The initial condition of the induction, at
$r=t$, follows from 
$ \| h ^{(t \times t)} \|_1 = b_t$.

Suppose now that the result holds for some 
$r\geq t$.
Let ${\mathscr X}_{(t \times (r+1))}={\mathscr A}'({\mathscr X}_{(t \times r)})$ and 
$h^{(t \times (r+1))}={\mathscr B}'(h^{(t \times r)})$ 
denote the image of 
${\mathscr X}_{(t \times r)}$ and $h^{(t \times r)}$
under the operation of Lemma \ref{rtor1}.  
Then
\begin{align}\nonumber
 x_{(t \times (r+1))}^{k(t)+t-1}&=(v_1,u_{r-t+2},v_2,u_{r-t+3},
\dots,v_t,u_{r+1}) ,\\ \nonumber
h^{(t \times (r+1))}_{k(t)+t-1} &=1,\\ \nonumber
 \| h ^{(t \times (r+1))} \|_1 &=
(t-1)(\| h ^{(t \times r)} \|_1 -1 )   + t \\ \nonumber
&=
(t-1)\left((t-1)^{r-t}\left(b_t+\frac{1}{t-2}\right)-\frac{1}{t-2}-1\right)+t\\ \nonumber
&=
(t-1)^{r+1-t}\left(b_t+\frac{1}{t-2}\right)-\frac{1}{t-2}
\end{align}
follows from Lemma \ref{rtor1}.
Therefore the result holds for $r+1$ and
\begin{equation}\nonumber
 g(A_{t,r})\geq (t-1)^{r-t}(b_t+\frac{1}{t-2})-\frac{1}{t-2}
\end{equation}
follows from  Lemma \ref{prop23}. 
\end{proof}

\section{Discussion}
\label{sec:discussion}

In this paper we provided a lower bound in Theorem \ref{thm:main} 
by the induction on $t,r$. Here we discuss some ideas for improving
our lower bound.

Look at Figure \ref{inductionfigure} again.
On the step
$\| h ^{(3\times 4)}\|_1 \rightarrow \| h ^{(4\times 4)}\|_1$, 
we can construct a larger primitive relation 
than the relation constructed  in the proof. 

\begin{Example}
\label{a44}
Let $\{x^i\}_{i=1}^7$ denote the circuits in Example \ref{seven}.
Using the natural embedding of 
$K_{3,4}$
into $K_{4,4}$, let
\begin{equation}\nonumber
 \bar{x}^i=x^i, \   \forall i=1,\dots,6.
\end{equation}
and for $i=7,\dots,10$, we define $\bar{x}^i $ as shown in
Table \ref{7to10}.
\begin{table}[htbp]
\caption{Circuits of $A_{4,4}$}
\begin{center}
\begin{tabular}{c|cccc|c} 
 & $u_1$ & $u_2$ & $u_3$ & $u_4$ &  \\\hline
 & $0$ & $1$ & $0$ & $-1$ & $v_1$ \\
$\bar{x}^7=(v_1,u_2,v_4,u_1,v_3,u_4)=$ & $0$ & $0$ & $0$ & $0$ & $v_2$ \\
 & $-1$ & $0$ & $0$ & $1$ & $v_3$ \\
 & $1$ & $-1$ & $0$ & $0$ & $v_4$ \\\hline
 & $0$ & $0$ & $0$ & $0$ & $v_1$ \\
$\bar{x}^8=(v_2,u_3,v_4,u_2)=$ & $0$ & $-1$ & $1$ & $0$ & $v_2$ \\
 & $0$ & $0$ & $0$ & $0$ & $v_3$ \\
 & $0$ & $1$ & $-1$ & $0$ & $v_4$ \\\hline
 & $0$ & $0$ & $0$ & $0$ & $v_1$ \\
$\bar{x}^9=(v_3,u_4,v_4,u_3)=$ & $0$ & $0$ & $0$ & $0$ & $v_2$ \\
 & $0$ & $0$ & $-1$ & $1$ & $v_3$ \\
 & $0$ & $0$ & $1$ & $-1$ & $v_4$ \\\hline
 & $0$ & $1$ & $0$ & $-1$ & $v_1$ \\
$\bar{x}^{10}=(v_1,u_2,v_2,u_3,v_3,u_1,v_4,u_4)=$ & $0$ & $-1$ & $1$ & $0$ & $v_2$ \\
 & $1$ & $0$ & $-1$ & $0$ & $v_3$ \\
 & $-1$ & $0$ & $0$ & $1$ & $v_4$ \\\hline
\end{tabular}
\end{center}
\label{7to10}
\end{table}
Then $\{\bar{x}^i\}_{i=1}^{10}$
are circuits of ${\rm ker}_{\mathbb{Z}}({\bf 1}_4^{(4)})$
and 
$\bar{h}=(2,4,6,6,10,12,7,7,7,7)$
is its primitive relation.
Then, by Proposition \ref{prop23},
\begin{equation}\label{evl44}
g({\bf 1}_4^{(4)})
\geq
2+4+6+6+10+12+7+7+7+7=68.
\end{equation}
Equation (\ref{evl44}) is sharper than 
the evaluation in Theorem \ref{thm:main}.
\end{Example}

We could start induction from circuits and its primitive relation in 
Example \ref{a44}. Then we obtain a sharper evaluation for some small
$t,r$.  However, unfortunately it turns out that,
if we start from Example \ref{a44}  then on  the step 
$\| h ^{(8\times r)}\|_1 \rightarrow \| h ^{(8\times (r+1))}\|_1$,
we can not obtain an exponential lower bound.  
Therefore we did not use Example \ref{a44} in the proof 
of Theorem \ref{thm:main}.  However this example suggests that there may be some other better
initial set of circuits for our induction.

\bibliographystyle{plainnat}
\bibliography{graver}

\end{document}